\newcommand{\thmx}{Theorem}
\newcommand{\lemx}{Lemma}
\newcommand{\defnx}{Definition}
\date{}
\begin{document}

\centerline{}

\centerline{\bf }

\centerline{}

\centerline{}

\centerline {\Large{\bf $P$-Regular Nearrings Characterized by}}

\centerline{}

\centerline {\Large{\bf Their Bi-ideals\footnote{This research is supported by the Group for Young Algebraists in University of Phayao (GYA), Thailand.}}}

\centerline{}

\centerline{\bf Aphisit Muangma and Aiyared Iampan\footnote{Corresponding author. Email: \texttt{aiyared.ia@up.ac.th}}}

\centerline{}

\centerline{Department of Mathematics, School of Science}

\centerline{University of Phayao, Phayao 56000, Thailand}

%
\theoremstyle{plain} 
\newtheorem{theorem}{Theorem}[section]
\newtheorem{lemma}[theorem]{Lemma}
\newtheorem{corollary}[theorem]{Corollary}
\newtheorem{proposition}[theorem]{Proposition}
\newtheorem{claim}[theorem]{Claim}
\theoremstyle{definition} 
\newtheorem{definition}[theorem]{Definition}
\newtheorem{remark}[theorem]{Remark}
\newtheorem{example}[theorem]{Example}
\newtheorem{notation}[theorem]{Notation}
\newtheorem{assertion}[theorem]{Assertion}

\begin{abstract}
Using the idea of quasi-ideals of $P$-regular nearrings,
the concept of bi-ideals of $P$-regular nearrings is generalized,
which is an extension of the concept of quasi-ideals of $P$-regular nearrings and some interesting characterizations of bi-ideals are obtained.
As a result, we prove that every element of a bi-ideal
$B$ of a $P$-regular nearring can be represented as the sum of two elements of $P$ and $Q$.
Moreover, every element of the finite intersection $\displaystyle \bigcap_{i=1}^{n}B_{i}$ of bi-ideals of a $P$-regular distributive
nearring $N$ can be represented as the sum of two elements of $P$ and $B_{1}NB_{2}N\ldots NB_{n-1}NB_{n}$.
\end{abstract}

{\bf Mathematics Subject Classification:} 16Y30, 16D25 \\

{\bf Keywords:} $P$-regular nearring, quasi-ideal, bi-ideal


\section{Introduction and Preliminaries}\numberwithin{equation}{section}

The notion of nearrings is first defined by Pilz \cite{Pilz} in 1977 and that of bi-ideals by Chelvam and Ganesan \cite{Tamizh} in 1987.
As we know, nearrings are a generalization of rings, and bi-ideals are a generalization of quasi-ideals and ideals in nearrings.
Many types of ideals on the algebraic structures were characterized by several authors such as:
In 1983, Yakabe \cite{Yakabea} introduced and characterized the notion of quasi-ideals of nearrings.
In 1987, Chelvam and Ganesan \cite{Tamizh} introduced and generalized the notion of quasi-ideals of nearrings which was introduced by \cite{Yakabea} to bi-ideals.
In 1989, Yakabe \cite{Yakabeb} characterized regular zero-symmetric nearrings without nonzero nilpotent elements in terms of quasi-ideals.
In 1990, Andrunakievich \cite{Andrunakievich} introduced $P$-regular rings.
In 1991, Choi \cite{choia} extended the $P$-regularity of rings which was introduced by \cite{Andrunakievich} to the $P$-regularity of nearrings.
In 2005, Kim, Jun and Yon \cite{Kim} introduced the notion of anti fuzzy ideals of near-rings and investigated some related properties.
In 2008, Abbasi and Rizvi \cite{Abbasi} studied prime ideals in near-rings.
In 2009, Zhan and B. Davvaz \cite{Zhan} introduced the concept of $(\overline{\in},\overline{\in}\vee\overline{q})$-fuzzy subnear-rings (ideals) of near-rings and obtain some of its related properties.
In 2010, Choi \cite{choib} gave some characterizations of quasi-ideals of $P$-regular nearrings and proved that every element of a quasi-ideal
$Q$ of a $P$-regular nearring can be represented as the sum of two elements of $P$ and $Q$.
In 2011, Dheena and Manivasan \cite{Dheena} gave some characterizations of quasi-ideals of $P$-regular nearrings in the same way as of Choi \cite{choib}.
In 2012, Sharma \cite{Sharma} studied the properties of intuitionistic fuzzy ideals of near ring with the help of their $(\alpha,\beta)$-cut sets.
The concept of quasi-ideals play an important role in studying the structure of nearrings.
Now, the notion of bi-ideals is an important and useful generalization of quasi-ideals of nearrings.
Therefore, we will study bi-ideals of nearrings in the same way as of quasi-ideals of nearrings which was studied by Choi \cite{choib}.

To present the main results we discuss some elementary definitions that we use later.


\begin{definition}\label{K1}\cite{Pilz}
A \textit{nearring} is a system consisting of a nonempty set $N$ together with two
binary operations on $N$ called addition and multiplication such that
\begin{itemize}
\item[(1)] $N$ together with addition is a group,
\item[(2)] $N$ together with multiplication is a semigroup, and
\item[(3)] $(a+b)c=ac+bc$ for all $a,b,c\in N$.
\end{itemize}
\end{definition}

For two nonempty subsets $A$ and $B$ of a nearring $N$, let
\begin{center}
$A+B:=\{a+b\mid a\in A$ and $b\in B\}$
\end{center}
and
\begin{center}
$AB:=\{ab\mid a\in A$ and $b\in B\}$.
\end{center}
If $A = \{a\}$, then we also write $\{a\}+B$ as $a+B$, and $\{a\}B$ as $aB$, and similarly if $B = \{b\}$.


\begin{definition}\label{K2}
 A nonempty subset $S$ of a nearring $N$ is called a \textit{left (right) $N$-subgroup} of $N$ if
\begin{itemize}
\item[(1)] $S$ together with addition is a subgroup of $N$, and
\item[(2)] $NS\subseteq S$ ($SN\subseteq S$).
\end{itemize}
\end{definition}


\begin{definition}\label{K3}
 A nonempty subset $S$ of a nearring $N$ is called an \textit{ideal} of $N$ if
\begin{itemize}
\item[(1)] $S$ together with addition is a normal subgroup of $N$,
\item[(2)] $SN\subseteq S$,
\item[(3)] $NS\subseteq S$, and
\item[(4)] $n_{1}(n_{2}+s)-n_{1}n_{2}\in S$ for all $s\in S$ and $n_{1},n_{2}\in N$.
\end{itemize}

Note that $S$ is a \textit{left ideal} of $N$ if $S$ satisfies (1), (3) and (4), and $S$ is a \textit{right ideal} of $N$ if $S$ satisfies (1) and (2).
\end{definition}


\begin{remark}
By \defnx~\ref{K3}, we have that
\begin{itemize}
\item[(1)] $S$ is a left ideal of $N$ if and only if $S$ is a normal left $N$-subgroup of $N$ and $n_{1}(n_{2}+s)-n_{1}n_{2}\in S$ for all $s\in S$ and $n_{1},n_{2}\in N$. \\[-0.76cm]
\item[(2)] $S$ is a right ideal of $N$ if and only if $S$ is a normal right $N$-subgroup of $N$.
\end{itemize}
\end{remark}


\begin{definition}\label{K5}
 A nearring $N$ is called a \textit{distributive nearring} if $a(b+c)=ab+ac$ for all $a,b,c\in N$.
\end{definition}


\begin{definition}\label{K6}
 A nonempty subset $Q$ of a nearring $N$ is called a \textit{quasi-ideal} of $N$ if
\begin{itemize}
\item[(1)] $Q$ together with addition is a subgroup of $N$, and
\item[(2)] $QN\cap NQ\subseteq Q$.
\end{itemize}
\end{definition}


\begin{definition}\label{K7}
A nonempty subset $B$ of a nearring $N$ is called a \textit{bi-ideal} of $N$ if
\begin{itemize}
\item[(1)] $B$ together with addition is a subgroup of $N$, and
\item[(2)] $BNB\subseteq B$.
\end{itemize}
\end{definition}

\begin{definition}\label{K8}
A nearring $N$ is called \textit{regular nearring} if for each $x\in N$ there exists $y\in N$ such that $xyx=x$.
\end{definition}


\begin{definition}\label{K9}
Let $N$ be a nearring with unity and $P$ an ideal of $N$. Then $N$ is said to be \textit{$P$-regular nearring}
if for each $x\in N$ there exists $y\in N$ such that $xyx-x\in P$.
\end{definition}


\section{Lemmas}

Before the characterizations of bi-ideals of nearrings for the main results,
we give some auxiliary results which are necessary in what follows.

\begin{lemma}\label{K10}\cite{choib}
Let $N$ be a nearring and $P=\{0\}$. If $N$ is a $P$-regular nearring, then $N$ is a regular nearring.
\end{lemma}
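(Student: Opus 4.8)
The plan is to unwind the two definitions involved and exhibit a regularity witness directly. Let $N$ be a $P$-regular nearring with $P=\{0\}$, and fix an arbitrary $x\in N$. By \defnx~\ref{K9}, since $N$ is $P$-regular, there exists some $y\in N$ such that $xyx-x\in P$. The whole argument hinges on substituting the hypothesis $P=\{0\}$ into this membership.

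Once $P=\{0\}$, the condition $xyx-x\in P$ becomes $xyx-x\in\{0\}$, which forces $xyx-x=0$. Here I would use that $(N,+)$ is a group (clause (1) of \defnx~\ref{K1}), so the equation $xyx-x=0$ can be rearranged by adding $x$ to both sides to yield $xyx=x$. This is exactly the regularity condition of \defnx~\ref{K8} for the element $x$, with the same witness $y$.

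Since $x\in N$ was arbitrary, for every $x\in N$ I have produced a $y\in N$ with $xyx=x$, so $N$ is a regular nearring by \defnx~\ref{K8}, completing the proof.

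I do not expect any genuine obstacle here: the statement is essentially definitional, and the only point requiring any care is the harmless group-theoretic step of converting $xyx-x=0$ into $xyx=x$, which is valid in any group. The result is best read as a sanity check confirming that $P$-regularity with trivial $P$ specializes to ordinary regularity, so the proof is deliberately short.
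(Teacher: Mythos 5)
Your proof is correct: the paper itself gives no proof of Lemma~\ref{K10} (it is cited from Choi \cite{choib}), and your argument---substituting $P=\{0\}$ into $xyx-x\in P$ to get $xyx-x=0$ and hence $xyx=x$---is exactly the standard definitional verification one would expect there. Nothing is missing; the group-theoretic rearrangement step you flag is valid and is the only step needed.
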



\begin{lemma}\label{K13}
Let $\mathcal{B}$ be a nonempty family of bi-ideals of a nearring $N$. Then $\bigcap\mathcal{B}$ is a bi-ideal of $N$.
\end{lemma}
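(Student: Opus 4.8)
The plan is to show that $\bigcap\mathcal{B}$ satisfies the two conditions of Definition~\ref{K7}, treating each in turn. Throughout, write $B := \bigcap\mathcal{B}$, and note first that $B$ is nonempty because every bi-ideal, being an additive subgroup of $N$, contains the identity $0$ of the group $(N,+)$; hence $0 \in B$ and $B \neq \emptyset$.

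For condition (1), I would verify that $B$ is a subgroup of $(N,+)$ using the standard subgroup criterion. Given $x, y \in B$, for each member $B_\alpha \in \mathcal{B}$ we have $x, y \in B_\alpha$, and since $B_\alpha$ is an additive subgroup we get $x - y \in B_\alpha$. As this holds for every $\alpha$, it follows that $x - y \in \bigcap\mathcal{B} = B$, so $B$ is a subgroup of $(N,+)$.

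For condition (2), I would show $BNB \subseteq B$. Let $t \in BNB$, so $t = b_1 n b_2$ for some $b_1, b_2 \in B$ and $n \in N$. Fix an arbitrary $B_\alpha \in \mathcal{B}$. Since $b_1, b_2 \in B \subseteq B_\alpha$, we have $t = b_1 n b_2 \in B_\alpha N B_\alpha \subseteq B_\alpha$, the last inclusion because each $B_\alpha$ is a bi-ideal. As $\alpha$ was arbitrary, $t$ lies in every member of $\mathcal{B}$, hence $t \in B$. This gives $BNB \subseteq B$, completing the verification that $B$ is a bi-ideal of $N$.

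This argument is entirely routine: both conditions descend through arbitrary intersections by the usual ``check membership in each factor'' pattern, and there is no genuine obstacle. The only point requiring a moment's care is ensuring $\bigcap\mathcal{B}$ is nonempty, which is why I would record at the outset that $0$ belongs to every additive subgroup and therefore to the intersection.
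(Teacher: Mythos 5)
Your proof is correct and follows essentially the same route as the paper: verify that both conditions of Definition~\ref{K7} pass to the intersection, the subgroup part by the standard criterion and the product part via $\bigcap\mathcal{B}\,N\,\bigcap\mathcal{B}\subseteq B_\alpha N B_\alpha\subseteq B_\alpha$ for each member. Your explicit check that $0\in\bigcap\mathcal{B}$ (so the intersection is nonempty) is a small point of care the paper leaves implicit, but otherwise the arguments coincide.
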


\begin{proof}
Clearly, $\bigcap\mathcal{B}$ together with addition is a subgroup of $N$.
Now, for all $B\in\mathcal{B}$, we have
\begin{center}
$\bigcap \mathcal{B} N\bigcap \mathcal{B} \subseteq BNB\subseteq B$.
\end{center}
Thus $\bigcap \mathcal{B} N\bigcap \mathcal{B} \subseteq \bigcap \mathcal{B}$. Hence $\bigcap \mathcal{B}$ is a bi-ideal of $N$.
\end{proof}


\begin{corollary}\label{K13_1}
Any finite intersection of bi-ideals of a nearring is a bi-ideal.
\end{corollary}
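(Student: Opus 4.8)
The plan is to obtain this statement as an immediate specialization of \lemx~\ref{K13}, rather than to repeat any structural verification. The key observation is that a finite intersection $\bigcap_{i=1}^{n}B_{i}$ of bi-ideals is precisely $\bigcap\mathcal{B}$ for the finite family $\mathcal{B}=\{B_{1},B_{2},\ldots,B_{n}\}$. Since each $B_{i}$ is assumed to be a bi-ideal, $\mathcal{B}$ is a family of bi-ideals, and as long as $n\geq 1$ it is nonempty, so it meets the hypothesis of \lemx~\ref{K13} exactly.

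First I would fix an arbitrary finite collection $B_{1},B_{2},\ldots,B_{n}$ of bi-ideals of a nearring $N$ and set $\mathcal{B}=\{B_{1},B_{2},\ldots,B_{n}\}$, noting that $\mathcal{B}$ is a nonempty family of bi-ideals of $N$. Next I would identify $\bigcap_{i=1}^{n}B_{i}$ with $\bigcap\mathcal{B}$, which is just the definition of the finite intersection. Finally I would cite \lemx~\ref{K13} to conclude that $\bigcap\mathcal{B}$, and hence $\bigcap_{i=1}^{n}B_{i}$, is a bi-ideal of $N$.

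There is essentially no obstacle here, since the corollary is a direct consequence of the more general lemma. The only point that genuinely requires care is the nonemptiness of $\mathcal{B}$: \lemx~\ref{K13} is stated for a \emph{nonempty} family, and this is guaranteed precisely because a finite intersection in the intended sense involves at least one bi-ideal. Once this is noted, the conclusion follows without any further computation.
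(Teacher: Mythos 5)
Your proposal is correct and matches the paper's intent exactly: the corollary is stated without proof precisely because it is the immediate specialization of \lemx~\ref{K13} to the nonempty finite family $\mathcal{B}=\{B_{1},B_{2},\ldots,B_{n}\}$, which is exactly the argument you give. Your explicit remark about nonemptiness (requiring $n\geq 1$) is the only hypothesis to check, and you handle it correctly.
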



\begin{lemma}\label{K13_2}
Every quasi-ideal of a nearring is a bi-ideal.
\end{lemma}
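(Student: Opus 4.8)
The plan is to verify that a quasi-ideal $Q$ of $N$ satisfies the two defining conditions of a bi-ideal given in \defnx~\ref{K7}. Condition (1) requires no work: being a quasi-ideal, $Q$ together with addition is already a subgroup of $N$ by \defnx~\ref{K6}(1), which is verbatim condition (1) of a bi-ideal. Hence the entire argument reduces to establishing the single inclusion $QNQ\subseteq Q$.

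To prove $QNQ\subseteq Q$, the key observation I would use is that $QNQ$ is contained in both $QN$ and $NQ$. Since $Q\subseteq N$ and multiplication on $N$ is associative (as $N$ is a semigroup under multiplication by \defnx~\ref{K1}(2)), for arbitrary $q_{1},q_{2}\in Q$ and $n\in N$ I can write $q_{1}nq_{2}=q_{1}(nq_{2})$ with $nq_{2}\in N$, so that $q_{1}nq_{2}\in QN$; symmetrically, $q_{1}nq_{2}=(q_{1}n)q_{2}$ with $q_{1}n\in N$, so that $q_{1}nq_{2}\in NQ$. Letting $q_{1},q_{2},n$ range freely then yields $QNQ\subseteq QN\cap NQ$.

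Finally I would invoke the second defining property of a quasi-ideal, namely $QN\cap NQ\subseteq Q$ from \defnx~\ref{K6}(2), to conclude $QNQ\subseteq QN\cap NQ\subseteq Q$. Combined with condition (1), this shows that $Q$ is a bi-ideal, completing the argument.

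I do not expect any serious obstacle here; the proof is essentially a one-line set-theoretic manipulation. The only point requiring care is the bookkeeping of the set products, specifically using $Q\subseteq N$ to absorb one factor of $Q$ into $N$ in each of the two inclusions, and relying on the associativity of nearring multiplication so that the triple product $QNQ$ is unambiguous and the regrouping $q_{1}(nq_{2})=(q_{1}n)q_{2}$ is legitimate.
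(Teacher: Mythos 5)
Your proposal is correct and follows exactly the paper's argument: condition (1) is immediate from the definition of a quasi-ideal, and the inclusion $QNQ\subseteq QN\cap NQ\subseteq Q$ is obtained by absorbing one factor of $Q$ into $N$ on each side and then applying the quasi-ideal condition. The only difference is that you spell out the elementwise justification (associativity and $Q\subseteq N$) that the paper leaves implicit.
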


\begin{proof}
Let $Q$ be a quasi-ideal of a nearring $N$. Then $Q$ together with addition is a subgroup of $N$.
Thus $QNQ\subseteq QN$ and $QNQ\subseteq NQ$, so $QNQ\subseteq QN\cap NQ\subseteq Q$.
Hence $Q$ is a bi-ideal of $N$.
\end{proof}


\section{Main Results}

In this section, give some characterizations of bi-ideals of nearrings.
Finally, we prove that every element of a bi-ideal
$B$ of a $P$-regular nearring can be represented as the sum of two elements of $P$ and $Q$.
Moreover, every element of the finite intersection $\displaystyle \bigcap_{i=1}^{n}B_{i}$ of bi-ideals of a $P$-regular distributive
nearring $N$ can be represented as the sum of two elements of $P$ and $B_{1}NB_{2}N\ldots NB_{n-1}NB_{n}$.

\begin{theorem}\label{Th2.1}
Let $N$ be a $P$-regular nearring. Then for each $n\in N$ there exists $n^{\prime}\in N$ such that $n'n\in P$.
\end{theorem}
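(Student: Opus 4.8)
The plan is to use the $P$-regularity of $N$ directly and to build $n'$ explicitly out of the witness element it supplies. Given $n\in N$, Definition~\ref{K9} provides some $y\in N$ with $nyn-n\in P$, and the definition of a $P$-regular nearring also guarantees a multiplicative unity $1$. The natural candidate is then $n':=ny-1$, chosen precisely so that $n$ appears as the right-hand factor in the product $n'n$. I would then compute $n'n=(ny-1)n$ and aim to recognize it as $nyn-n$, which already lies in $P$.

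The key step is the expansion of $(ny-1)n$. Since $N$ is a nearring we have at our disposal only the right distributive law $(a+b)c=ac+bc$, and the product $n'n=(ny+(-1))n$ is exactly of this shape, with the sum on the left and $n$ on the right. Applying right distributivity and the associativity of multiplication gives $n'n=(ny)n+(-1)n=nyn+(-1)n$. The whole construction is arranged so that this single application of right distributivity suffices.

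The one routine auxiliary fact I would need is that $(-1)n=-n$ in any nearring. This follows because $0\cdot n=0$ (from $0\cdot n=(0+0)n=0\cdot n+0\cdot n$ together with additive cancellation in the group $(N,+)$), whence $0=(1+(-1))n=n+(-1)n$ and so $(-1)n=-n$. Substituting this into the expression above yields $n'n=nyn-n\in P$, which is exactly the desired conclusion.

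I do not expect a genuine obstacle here; the content is short. The only point requiring care, and the reason the argument is set up as it is, is the absence of a left distributive law in a general nearring: no step may expand a product whose single left factor is multiplied across a sum on the right. Placing $n$ on the right of $n'$ keeps the computation entirely within the right distributive law, so this pitfall is avoided throughout.
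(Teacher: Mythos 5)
Your proof is correct. Note that the paper states Theorem~\ref{Th2.1} without any proof (it is presumably carried over from Choi~\cite{choib}), so there is no argument of the paper to compare against; your construction $n'=ny-1$ is the natural one and fills that gap. Every step is consistent with the paper's definitions: Definition~\ref{K9} does assume $N$ has a unity, the expansion $n'n=(ny+(-1))n=(ny)n+(-1)n$ uses only the right distributive law of Definition~\ref{K1}, and the auxiliary identities $0\cdot n=0$ and $(-1)n=-n$ follow exactly by the cancellation arguments you give, so $n'n=nyn-n\in P$ as required.
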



\begin{theorem}\label{Th2.2}
Let $N$ be a $P$-regular distributive nearring. Then for every right ideal $R$ and every left ideal $L$ of $N$,
\begin{center}
$( P + R )\cap( P + L ) = P + RL$.
\end{center}
\end{theorem}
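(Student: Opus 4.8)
The plan is to prove the stated equality by establishing the two inclusions separately, with the reverse containment $P + RL \subseteq (P+R)\cap(P+L)$ being the routine half and the forward containment being where $P$-regularity and distributivity do the real work.

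For $P + RL \subseteq (P+R)\cap(P+L)$ I would argue purely from the defining containments of the ideals. Since $R$ is a right ideal and $L \subseteq N$, we have $RL \subseteq RN \subseteq R$, hence $P + RL \subseteq P + R$; since $L$ is a left ideal and $R \subseteq N$, we have $RL \subseteq NL \subseteq L$, hence $P + RL \subseteq P + L$. Intersecting the two conclusions yields the inclusion. No regularity is needed here.

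For the forward inclusion, take $a \in (P+R)\cap(P+L)$ and record both representations $a = p_1 + r$ with $p_1 \in P,\, r \in R$, and $a = p_2 + l$ with $p_2 \in P,\, l \in L$. By $P$-regularity choose $y \in N$ with $aya - a \in P$. The key move is to substitute the first representation into the left factor and the second into the rightmost factor of $aya$: by the right distributive law (free in any nearring) $ay = (p_1 + r)y = p_1 y + ry$, and since $N$ is distributive $aya = (ay)(p_2 + l) = (ay)p_2 + (ay)l$; expanding $(ay)l = (p_1 y + ry)l = (p_1 y)l + (ry)l$ again by right distributivity. Now I sort the three summands: $(ay)p_2 \in NP \subseteq P$ and $(p_1 y)l \in PN \subseteq P$ because $P$ is an ideal, while $(ry)l \in RL$ because $ry \in RN \subseteq R$ and $l \in L$. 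Thus $aya = p_0 + (ry)l$ for some $p_0 \in P$. Finally, since $aya - a \in P$, writing $a = -(aya - a) + aya = \bigl(-(aya-a) + p_0\bigr) + (ry)l$ exhibits $a$ as an element of $P$ plus an element of $RL$, so $a \in P + RL$.

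The main obstacle I anticipate is the bookkeeping forced by the possibly non-abelian additive group: I must keep every $P$-summand on the correct (left) side so that the final decomposition is genuinely of the form (element of $P$) $+$ (element of $RL$), and I must invoke the left distributive law only through the hypothesis that $N$ is distributive while using the right distributive law as a nearring axiom. A secondary but essential care point is to route each product into its intended set using the exact one-sided conditions $NP \subseteq P$, $PN \subseteq P$, $RN \subseteq R$, and $l \in L$, rather than assuming any two-sided behavior that the definitions do not grant.
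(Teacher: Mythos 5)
Your proof is correct. The paper itself states Theorem~\ref{Th2.2} without proof (it is imported from the quasi-ideal literature), so there is nothing to compare against directly; but your argument is exactly the device the paper uses for its own analogous results (Theorems~\ref{Th2.4} and~\ref{Th2.5_1}): substitute both decompositions $a=p_1+r$ and $a=p_2+l$ into $aya$, expand using the nearring axiom $(x+y)z=xz+yz$ together with left distributivity from the distributive hypothesis, absorb the cross terms into $P$ via $NP\subseteq P$ and $PN\subseteq P$, and recover $a=-(aya-a)+aya$, keeping every $P$-summand on the left to respect the possibly non-abelian addition. Both inclusions are handled correctly, and each one-sided condition ($RN\subseteq R$, $NL\subseteq L$) is invoked exactly where it is needed.
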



\begin{theorem}\label{Th2.3}
 Let $N$ be a $P$-regular nearring and $B$ a bi-ideal of $N$. Then every $x\in B$ there exist $p^{\prime}\in P$ and $b^{\prime}\in B$ such that $x=p^{\prime}+b^{\prime}$.
\end{theorem}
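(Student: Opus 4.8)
The plan is to apply the $P$-regularity of $N$ directly to an arbitrary element $x$ of the bi-ideal $B$ and then rearrange the resulting additive equation. Since $B \subseteq N$ and $N$ is $P$-regular, \defnx~\ref{K9} supplies an element $y \in N$ with $xyx - x \in P$; I would write $p := xyx - x \in P$. The key structural observation is that $xyx$ itself lies in $B$: indeed $x \in B$, $y \in N$ and $x \in B$, so $xyx \in BNB$, and the defining inclusion $BNB \subseteq B$ of \defnx~\ref{K7} forces $xyx \in B$.

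Next I would solve for $x$ in the additive group $(N,+)$. Because this group need not be abelian, the order of the terms must be tracked carefully: starting from $xyx + (-x) = p$, adding $x$ on the right gives $(xyx + (-x)) + x = p + x$, hence $xyx = p + x$, and therefore $x = -p + xyx$. Setting $p^{\prime} := -p$ and $b^{\prime} := xyx$ then yields the desired $x = p^{\prime} + b^{\prime}$.

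Finally I would verify the membership of the two pieces. Since $P$ is an ideal of $N$, it is in particular a subgroup of $(N,+)$, so $p = xyx - x \in P$ gives $p^{\prime} = -p \in P$; and $b^{\prime} = xyx \in B$ by the bi-ideal computation above. This establishes the representation $x = p^{\prime} + b^{\prime}$ with $p^{\prime} \in P$ and $b^{\prime} \in B$, as required.

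The only genuine point of care — and the place where a careless argument could go wrong — is the non-commutativity of addition in a nearring: the regularity condition is phrased as $xyx - x \in P$ rather than $x - xyx \in P$, so the additive inverse $-p$ must be placed on the correct (left) side when isolating $x$. Once the bracketing is handled correctly, no further obstacle remains, and the theorem follows immediately from $P$-regularity together with the closure property $BNB \subseteq B$ of a bi-ideal.
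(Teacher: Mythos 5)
Your proposal is correct and follows essentially the same argument as the paper's own proof: apply $P$-regularity to $x$, note $xyx \in BNB \subseteq B$, and rearrange $xyx - x = p$ to $x = -p + xyx$ with $p' = -p \in P$ and $b' = xyx \in B$. Your extra care with the order of terms in the (possibly non-abelian) additive group is a welcome refinement of the same computation, not a different route.
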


\begin{proof}
Let $x\in B$.
Since $N$ is a $P$-regular nearring and $x\in B\subseteq N$, there exists $y\in N$ such that $xyx-x=p$ for some $p\in P$.
Thus $x=-p+xyx$.
Since $B$ is a bi-ideal of $N$, we have $xyx\in BNB\subseteq B$.
Since $p\in P$ and $P$ together with addition is a subgroup of $N$, we have $-p\in P$.
Put $p^{\prime}=-p$ and $b^{\prime}=xyx$. Thus
\begin{center}
$x=-p+xyx=p^{\prime}+b^{\prime}\in P+B$.
\end{center}
\end{proof}


\begin{theorem}\label{Th2.4}
Let $N$ be a $P$-regular distributive nearring and $B_{1}$ and $B_{2}$ bi-ideals of $N$.
If $b\in B_{1}\cap B_{2}$ and $x\in N$, then the element $b$ can be represented as
\begin{center}
$b=p+b_{1}x_{1}b_{2}$ and $b_{1}x_{1}b_{2}xP\subseteq P$
\end{center}
for some $p\in P,x_{1}\in N,b_{1}\in B_{1}$ and $b_{2}\in B_{2}$.
\end{theorem}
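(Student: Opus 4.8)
The plan is to imitate the argument of \thmx~\ref{Th2.3}, applying $P$-regularity to the single element $b$ and then simply reading off the four witnesses. First I would use that $b\in N$ and that $N$ is $P$-regular to produce $y\in N$ with $byb-b\in P$; writing $q:=byb-b\in P$, the same sign bookkeeping as in \thmx~\ref{Th2.3} gives $b=-q+byb$. Since $P$ is the additive part of an ideal, it is in particular a subgroup of $(N,+)$, so $-q\in P$, and I would set $p:=-q$.

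Next I would choose the remaining data. Because $b\in B_{1}\cap B_{2}$, the one element $b$ lies in both bi-ideals, so I take $b_{1}:=b\in B_{1}$, $b_{2}:=b\in B_{2}$, and $x_{1}:=y\in N$. Then $b_{1}x_{1}b_{2}=byb$, and the first required identity is exactly $b=p+byb=p+b_{1}x_{1}b_{2}$. Note also that $byb\in B_{1}NB_{2}$, which matches the shape demanded by the statement (and is the $n=2$ instance of the intersection representation advertised in the abstract).

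The only remaining point is the absorption condition $b_{1}x_{1}b_{2}xP\subseteq P$. Here I would observe that $b_{1}x_{1}b_{2}x=bybx$ is a single element of $N$, and that condition (3) in the definition of an ideal (\defnx~\ref{K3}) yields $NP\subseteq P$. Hence $(bybx)P\subseteq NP\subseteq P$, as required. This holds for the given $x\in N$ for \emph{any} choice of $y$, so it is automatically compatible with the decomposition produced above; there is no tension between the two requirements.

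I do not expect a genuine obstacle here: the two assertions decouple, the first being a two-bi-ideal refinement of \thmx~\ref{Th2.3} applied to $b$, and the second following at once from the ideal property $NP\subseteq P$. The one thing to watch is the possible non-commutativity of $(N,+)$, so I would keep the step $b=-q+byb$ explicit (writing the inverse on the left, exactly as in \thmx~\ref{Th2.3}) rather than $b=byb-q$. Finally, I note that distributivity of $N$ is not actually used in this particular statement; it is presumably listed because the result serves as the base case feeding the distributive intersection theorem $\bigcap_{i=1}^{n}B_{i}$.
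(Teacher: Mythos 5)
Your proof is correct, and it takes a genuinely different and simpler route than the paper. The paper never considers taking $b_{1}=b_{2}=b$: instead it applies \thmx~\ref{Th2.3} twice to write $b=p_{1}+b_{1}$ with $b_{1}\in B_{1}$ and $b=p_{2}+b_{2}$ with $b_{2}\in B_{2}$, substitutes both into the $P$-regularity identity $b=-p_{3}+bx_{1}b$, and expands $(p_{1}+b_{1})x_{1}(p_{2}+b_{2})$ into $p_{1}x_{1}p_{2}+p_{1}x_{1}b_{2}+b_{1}x_{1}p_{2}+b_{1}x_{1}b_{2}$, absorbing the first three summands into $P$ by the ideal axioms; it then obtains $b_{1}x_{1}b_{2}xP\subseteq P$ by solving for $b_{1}x_{1}b_{2}=-p_{4}+b$ and estimating $(-p_{4}+b)xP\subseteq -p_{4}xP+bxP\subseteq P+P\subseteq P$. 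That multiplying-out step is exactly where the paper uses the distributivity hypothesis (left distributivity is needed to expand over $p_{2}+b_{2}$), so your closing observation is accurate: the statement as written holds in any $P$-regular nearring, because your witnesses ($b_{1}=b_{2}=b$, $x_{1}=y$) require no expansion, and the absorption condition is trivial for \emph{any} witnesses via $b_{1}x_{1}b_{2}x\in N$ and $NP\subseteq P$ --- a point the paper reaches only by a roundabout computation. What the paper's heavier argument buys is the template for \thmx~\ref{Th2.5_1}: in that induction step the factors $b_{1},\ldots,b_{n-1}$ are supplied by the inductive hypothesis and cannot all be taken equal to $b$, so the substitute-and-expand computation (and with it distributivity) is genuinely needed there; your shortcut proves this theorem in isolation but would have to reintroduce the same expansion to carry the $n$-fold version.
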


\begin{proof}
Let $b\in B_{1}\cap B_{2}$.
Since $N$ is a $P$-regular nearring, there exists $x_{1}\in N$ such that
$bx_{1}b-b\in P$.
By \lemx~\ref{K13}, we have $B_{1}\cap B_{2}$ is a bi-ideal of $N$.
Since $b\in B_{1}\cap B_{2}$, we have $b\in B_{1}$ and $b\in B_{2}$.
By \thmx~\ref{Th2.3}, we have $b=p_{1}+b_{1}$ for some $p_{1}\in P$ and $b_{1}\in B_{1}$,
and $b=p_{2}+b_{2}$ for some $p_{2}\in P$ and $b_{2}\in B_{2}$.
Since $bx_{1}b-b\in P$, we have $bx_{1}b-b=p_{3}$ for some $p_{3}\in P$.
Thus $b=-p_{3}+bx_{1}b$. Hence
\begin{eqnarray*}
b &=& -p_{3}+bx_{1}b  \\
&=& -p_{3}+(p_{1}+b_{1})x_{1}(p_{2}+b_{2})  \\
&=& -p_{3}+p_{1}x_{1}p_{2}+p_{1}x_{1}b_{2}+b_{1}x_{1}p_{2}+b_{1}x_{1}b_{2}.
 \end{eqnarray*}
Since $P$ is an ideal of $N$, we have
$-p_{3},p_{1}x_{1}p_{2},p_{1}x_{1}b_{2},b_{1}x_{1}p_{2}\in P$.
Then $-p_{3}+p_{1}x_{1}p_{2}+p_{1}x_{1}b_{2}+b_{1}x_{1}p_{2}=p_{4}$ for some $p_{4}\in P$.
Thus $b=p_{4}+b_{1}x_{1}b_{2}$, so $b_{1}x_{1}b_{2}=-p_{4}+b$.
Hence
\begin{center}
$b_{1}x_{1}b_{2}xP=(-p_{4}+b)xP\subseteq -p_{4}xP+bxP\subseteq P+P\subseteq P$.
\end{center}
\end{proof}


\begin{theorem}\label{Th2.5_1}
Let $N$ be a $P$-regular distributive nearring and $\{B_{i}\mid i\in \mathds{Z}$ and $1\leq i\leq n\}$ a nonempty family of bi-ideals of $N$.
If $\displaystyle b\in \bigcap_{i=1}^{n}B_{i}$ and $x\in N$, then the element $b$ can be represented as
\begin{center}
$b=p+b_{1}x_{1}b_{2}x_{2}\ldots b_{n-1}x_{n-1}b_{n}$ and $b_{1}x_{1}b_{2}x_{2}\ldots b_{n-1}x_{n-1}b_{n}xP\subseteq P$
\end{center}
for some $p\in P,x_{1},x_{2},\ldots,x_{n-1}\in N$ and $b_{i}\in B_{i}$ for all $1\leq i\leq n$.
\end{theorem}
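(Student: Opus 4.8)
The plan is to argue by induction on $n$, the number of bi-ideals. The base case $n=2$ is precisely \thmx~\ref{Th2.4}, so it suffices to carry out the inductive step, reducing the representation with $n$ factors to the one with $n-1$ factors by peeling off a single leading factor $b_{1}x_{1}$ and absorbing every stray term into $P$. Throughout, the decisive structural facts I would use are that $P$ is an ideal (so $PN\subseteq P$, $NP\subseteq P$, and $P$ is closed under addition and negation) and that $N$ is distributive, which licenses the left distributive law needed to expand products.

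First I would fix $b\in\bigcap_{i=1}^{n}B_{i}$ and $x\in N$, and observe that $b$ lies in both $B_{1}$ and $B':=\bigcap_{i=2}^{n}B_{i}$, the latter being a bi-ideal by \corx~\ref{K13_1}. Applying \thmx~\ref{Th2.4} to the pair $B_{1},B'$ yields $x_{1}\in N$, $b_{1}\in B_{1}$ and $c\in B'$ with $b=p+b_{1}x_{1}c$ for some $p\in P$. Now $c$ lies in the intersection of the $n-1$ bi-ideals $B_{2},\ldots,B_{n}$, so the inductive hypothesis (applied to $c$, using the same $x$) provides $x_{2},\ldots,x_{n-1}\in N$ and $b_{i}\in B_{i}$ for $2\le i\le n$ with $c=p''+b_{2}x_{2}\cdots b_{n-1}x_{n-1}b_{n}$ for some $p''\in P$. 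Substituting and using left distributivity gives $b=p+b_{1}x_{1}p''+b_{1}x_{1}b_{2}x_{2}\cdots b_{n-1}x_{n-1}b_{n}$. Since $b_{1}x_{1}p''\in NP\subseteq P$, collecting $p+b_{1}x_{1}p''=\tilde{p}\in P$ yields the desired representation $b=\tilde{p}+b_{1}x_{1}b_{2}x_{2}\cdots b_{n-1}x_{n-1}b_{n}$.

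For the second assertion, I would note that it follows directly from the representation, independently of the induction: writing $D:=b_{1}x_{1}\cdots b_{n}=-\tilde{p}+b$ and using right distributivity, one has $DxP=(-\tilde{p}+b)xP$, and since $-\tilde{p}\,x\in P$ while $bx\in N$, the absorption properties $PN\subseteq P$ and $NP\subseteq P$ together with additive closure of $P$ give $DxP\subseteq P+P\subseteq P$. I expect the only real difficulty to be bookkeeping: keeping the indices straight across the induction and verifying at each collection step that the non-product terms genuinely land in $P$. There is no conceptual obstacle beyond the careful use of distributivity, which is exactly the point at which the hypothesis that $N$ is distributive is indispensable.
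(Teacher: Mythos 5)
Your proposal is correct, but it runs the induction in a genuinely different way from the paper. The paper peels off the \emph{last} factor: it applies the inductive hypothesis to $b$ itself (viewing $b\in\bigcap_{i=1}^{n-1}B_{i}$) to get $b=p_{1}+b_{1}x_{1}\cdots b_{n-1}$, applies \thmx~\ref{Th2.3} to get $b=p_{2}+b_{n}$ with $b_{n}\in B_{n}$, and then introduces a \emph{fresh} $P$-regularity witness $x_{n-1}$ with $bx_{n-1}b-b\in P$, writing $b=-p_{3}+bx_{n-1}b$ and expanding the product $(p_{1}+b_{1}x_{1}\cdots b_{n-1})x_{n-1}(p_{2}+b_{n})$ into four terms, three of which are absorbed into $P$. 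You instead peel off the \emph{first} factor: you invoke \corx~\ref{K13_1} to view $\bigcap_{i=2}^{n}B_{i}$ as a single bi-ideal, use \thmx~\ref{Th2.4} as a splitting lemma on the pair $\bigl(B_{1},\bigcap_{i=2}^{n}B_{i}\bigr)$, and then apply the inductive hypothesis to the \emph{new} element $c$ rather than to $b$; your inductive step then needs no regularity computation at all (it is encapsulated in \thmx~\ref{Th2.4}) and only one left-distributive expansion $b_{1}x_{1}(p''+b_{2}x_{2}\cdots b_{n})=b_{1}x_{1}p''+b_{1}x_{1}b_{2}x_{2}\cdots b_{n}$, with a single term to absorb. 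This makes your bookkeeping lighter (two terms versus four), at the cost of leaning on \corx~\ref{K13_1} and on \thmx~\ref{Th2.4} at every stage; the paper's version is more self-contained, re-deriving everything from $P$-regularity and \thmx~\ref{Th2.3}. Your handling of the second assertion ($DxP\subseteq P$ via $D=-\tilde{p}+b$, right distributivity, and the absorption properties of the ideal $P$) is exactly the paper's. One small repair: your base case is $n=2$ (\thmx~\ref{Th2.4}), so the case $n=1$ allowed by the statement's ``nonempty family'' is not covered; it follows at once from \thmx~\ref{Th2.3} together with the same absorption computation, which is precisely the paper's base case.
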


\begin{proof}
If $b\in B_{1}$, then by \thmx~\ref{Th2.3}, we have $b=p+b_{1}$ for some $p\in P$ and $b_{1}\in B_{1}$.
Thus
\begin{center}
$b_{1}xP=(-p+b)xP\subseteq -pxP+bxP\subseteq P+P\subseteq P$.
\end{center}
Assume that the theorem is true for integer $n-1$.
Let $\displaystyle b\in \bigcap_{i=1}^{n}B_{i}.$
Since $\displaystyle\bigcap_{i=1}^{n}B_{i}\subseteq \bigcap_{i=1}^{n-1}B_{i}$ and $\displaystyle\bigcap_{i=1}^{n}B_{i}\subseteq B_{n}$, we have $\displaystyle b\in \bigcap_{i=1}^{n-1}B_{i}$ and $b\in B_{n}.$
By assumption, we have
\begin{equation}\label{Eq15}
b=p_{1}+b_{1}x_{1}b_{2}x_{2}\ldots b_{n-2}x_{n-2}b_{n-1}
\end{equation}
and $b_{1}x_{1}b_{2}x_{2}\ldots b_{n-2}x_{n-2}b_{n-1}xP\subseteq P$ for some $p_{1}\in P,x_{1},x_{2},\ldots ,x_{n-2}\in N$ and $b_{i}\in B_{i}$ for all $1\leq i\leq n-1$.
By \thmx~\ref{Th2.3}, we have
\begin{equation}\label{Eq16}
b=p_{2}+b_{n}
\end{equation}
for some $p_{2}\in P$ and $b_{n}\in B_{n}$.
Since $N$ is a $P$-regular nearring, there exists $x_{n-1}\in N$ such that
$bx_{n-1}b-b\in P$.
Thus $bx_{n-1}b-b=p_{3}$ for some $p_{3}\in P$, so $b=-p_{3}+bx_{n-1}b$.
By \eqref{Eq15} and \eqref{Eq16}, we have
\begin{equation}\label{Eq17}
bx_{n-1}b=(p_{1}+b_{1}x_{1}b_{2}x_{2}\ldots b_{n-2}x_{n-2}b_{n-1})x_{n-1}(p_{2}+b_{n}).
\end{equation}
By \eqref{Eq17}, we have
\begin{eqnarray*}
b &=&-p_{3}+bx_{n-1}b \\
&=& -p_{3}+(p_{1}+b_{1}x_{1}b_{2}x_{2}\ldots b_{n-2}x_{n-2}b_{n-1})x_{n-1}(p_{2}+b_{n}) \\
&=& -p_{3}+p_{1}x_{n-1}p_{2}+p_{1}x_{n-1}b_{n}+ \\
&& b_{1}x_{1}b_{2}x_{2}\ldots b_{n-2}x_{n-2}b_{n-1}x_{n-1}p_{2}+ \\
&& b_{1}x_{1}b_{2}x_{2}\ldots b_{n-2}x_{n-2}b_{n-1}x_{n-1}b_{n}.
\end{eqnarray*}
Put $-p_{3}+p_{1}x_{n-1}p_{2}+p_{1}x_{n-1}b_{n}+b_{1}x_{1}b_{2}x_{2}\ldots b_{n-2}x_{n-2}b_{n-1}x_{n-1}p_{2}=p_{4}$ for some $p_{4}\in P$. Thus
\begin{center}
$b=p_{4}+b_{1}x_{1}b_{2}x_{2}\ldots b_{n-2}x_{n-2}b_{n-1}x_{n-1}b_{n}$.
\end{center}
That is $b_{1}x_{1}b_{2}x_{2}\ldots b_{n-2}x_{n-2}b_{n-1}x_{n-1}b_{n}=-p_{4}+b$.
Hence
\begin{eqnarray*}
b_{1}x_{1}b_{2}x_{2}\ldots b_{n-2}x_{n-2}b_{n-1}x_{n-1}b_{n}xP &=& (-p_{4}+b)xP \\
&\subseteq& -p_{4}xP+bxP \\
&\subseteq& P+P \\
&\subseteq& P.
\end{eqnarray*}
\end{proof}


\begin{theorem}\label{Th2.6}
Let $N$ be a $P$-regular nearring and $B$ a bi-ideal of $N$. Then
\begin{center}
$P+B=P+BNB$.
\end{center}
\end{theorem}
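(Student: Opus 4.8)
The plan is to prove the set equality $P + B = P + BNB$ by establishing the two inclusions separately. The reverse inclusion $P + BNB \subseteq P + B$ is immediate: since $B$ is a bi-ideal of $N$, \defnx~\ref{K7} gives $BNB \subseteq B$, and prepending $P$ preserves this, so $P + BNB \subseteq P + B$.

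For the forward inclusion $P + B \subseteq P + BNB$, I would start with an arbitrary element $p + b$ with $p \in P$ and $b \in B$, and show it lies in $P + BNB$. The key is to rewrite $b$ itself using $P$-regularity. Since $b \in B \subseteq N$ and $N$ is $P$-regular, \defnx~\ref{K9} supplies an element $y \in N$ with $byb - b \in P$; writing $byb - b = q$ for some $q \in P$ and solving in the additive group yields $b = -q + byb$. Because $B$ is a bi-ideal, $byb \in BNB$, and because $P$ is an additive subgroup (by \defnx~\ref{K3}), $-q \in P$. This is exactly the representation furnished by \thmx~\ref{Th2.3}, now with the witness $b' = byb$ recorded as an element of $BNB$ rather than merely of $B$.

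It then remains to combine $p$ with this representation. Using associativity of the additive group, $p + b = p + (-q + byb) = (p - q) + byb$; since $p, -q \in P$ and $P$ is closed under addition, $p - q \in P$, while $byb \in BNB$, so $p + b \in P + BNB$. Ranging over all such $p$ and $b$ gives $P + B \subseteq P + BNB$, and together with the reverse inclusion this yields the claimed equality.

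The only point demanding care is the manipulation inside the additive group: since the addition of a nearring need not be commutative, I must respect the order of terms throughout, isolating $b$ as $-q + byb$ (and not $byb - q$) and grouping $p + (-q)$ on the left by associativity rather than reshuffling the summands. No deeper obstacle arises, because the substantive content --- the existence of a $P$-regular representation that places the nontrivial summand inside $BNB$ --- is already packaged in \thmx~\ref{Th2.3}.
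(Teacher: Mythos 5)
Your proposal is correct and follows essentially the same route as the paper's proof: the easy inclusion $P+BNB\subseteq P+B$ from $BNB\subseteq B$, and the forward inclusion by applying $P$-regularity to $b$ itself to write $b=-q+byb$ with $byb\in BNB$, then regrouping as $(p-q)+byb$. Your care with the order of summands in the (possibly noncommutative) additive group matches the paper's computation exactly.
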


\begin{proof}
Since $B$ is a bi-ideal of $N$, we have $BNB\subseteq B$. Thus
\begin{equation}\label{Eq11}
P+BNB\subseteq P+B.
\end{equation}
On the other hand, let $n\in P+B$. Then $n=p^{\prime}+b^{\prime}$ for some $p^{\prime}\in P$ and $b^{\prime}\in B$.
Since $N$ is a $P$-regular nearring, there exists $x\in N$ such that
$b^{\prime}xb^{\prime}-b^{\prime}\in P$.
Thus $b^{\prime}xb^{\prime}-b^{\prime}=p^{\prime\prime}$ for some $p^{\prime\prime}\in P$, so $b^{\prime}=-p^{\prime\prime}+b^{\prime}xb^{\prime}$.
Therefore
\begin{center}
$n=p^{\prime}+b^{\prime}=p^{\prime}+(-p^{\prime\prime}+b^{\prime}xb^{\prime})=(p^{\prime}-p^{\prime\prime})+b^{\prime}xb^{\prime}\in P+BNB$.
\end{center}
Hence
\begin{equation}\label{Eq12}
P+B\subseteq P+BNB.
\end{equation}
By \eqref{Eq11} and \eqref{Eq12}, we have $P+B=P+BNB$.
\end{proof}


\begin{theorem}\label{Th2.7}
Let $N$ be a $P$-regular nearring, and $B_{1}$ and $B_{2}$ bi-ideals of $N$. Then
\begin{center}
$P+(B_{1}\cap B_{2})\subseteq P+(B_{1}NB_{2}\cap B_{2}NB_{1})$.
\end{center}
\end{theorem}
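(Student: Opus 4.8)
The plan is to prove the inclusion elementwise: I would take an arbitrary $n\in P+(B_{1}\cap B_{2})$ and produce a decomposition $n=p'+c$ with $p'\in P$ and $c\in B_{1}NB_{2}\cap B_{2}NB_{1}$. So I would begin by writing $n=p+b$ for some $p\in P$ and $b\in B_{1}\cap B_{2}$, and then concentrate on rewriting $b$ modulo $P$ as an element lying in both products.

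The key observation is that a single symmetric product does the job. Since $b\in B_{1}\cap B_{2}$, for \emph{any} $x\in N$ the product $bxb$ lies in both $B_{1}NB_{2}$ and $B_{2}NB_{1}$: reading the left factor as an element of $B_{1}$ and the right factor as an element of $B_{2}$ gives $bxb\in B_{1}NB_{2}$, while reading them in the opposite order gives $bxb\in B_{2}NB_{1}$ (here I use that multiplication is associative, so $B_{i}NB_{j}=\{\,b_{i}nb_{j}\mid b_{i}\in B_{i},\,n\in N,\,b_{j}\in B_{j}\,\}$). Hence $bxb\in B_{1}NB_{2}\cap B_{2}NB_{1}$ for every choice of $x$. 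Note that this is cleaner than the route through \thmx~\ref{Th2.4}, since I never split $b$ into a $P$-part plus a $B_{i}$-part; the intersection is obtained directly from the symmetry of $bxb$.

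Next I would invoke $P$-regularity to replace $b$ by such a product up to an element of $P$. Since $b\in N$, there is $x_{1}\in N$ with $bx_{1}b-b\in P$; writing $bx_{1}b-b=p_{3}\in P$ gives $b=-p_{3}+bx_{1}b$. Substituting into $n=p+b$ and using associativity of addition yields $n=(p+(-p_{3}))+bx_{1}b$. Because $P$ is an additive subgroup, $p+(-p_{3})\in P$, and by the observation above $bx_{1}b\in B_{1}NB_{2}\cap B_{2}NB_{1}$, so $n\in P+(B_{1}NB_{2}\cap B_{2}NB_{1})$, which is the desired conclusion.

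I do not anticipate a genuine obstacle: the entire argument rests on the single product $bx_{1}b$, so distributivity is never needed (consistent with the hypothesis being merely ``$P$-regular nearring''), and \thmx~\ref{Th2.3}--\thmx~\ref{Th2.5_1} are not required. The only point deserving care is that the additive rewriting $b=-p_{3}+bx_{1}b$ and the regrouping $p+(-p_{3}+bx_{1}b)=(p+(-p_{3}))+bx_{1}b$ use only associativity and the subgroup property of $P$, hence remain valid even though $(N,+)$ need not be abelian.
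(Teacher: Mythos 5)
Your proposal is correct and follows essentially the same route as the paper's own proof: decompose $n=p+b$ with $b\in B_{1}\cap B_{2}$, apply $P$-regularity to get $b=-p_{3}+bx_{1}b$, and absorb $p-p_{3}$ into $P$ while observing that the symmetric product $bx_{1}b$ lies in $B_{1}NB_{2}\cap B_{2}NB_{1}$. Your explicit remarks on why the symmetric product lands in both sets and on the non-abelian regrouping merely spell out steps the paper leaves implicit.
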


\begin{proof}
Let $b\in P+(B_{1}\cap B_{2})$.
Then $b=p+b^{\prime}$ for some $p\in P$ and $b^{\prime}\in B_{1}\cap B_{2}$.
Thus $b^{\prime}\in B_{1}$ and $b^{\prime}\in B_{2}$.
Since $N$ is a $P$-regular nearring, there exists $x\in N$ such that
$b^{\prime}xb^{\prime}-b^{\prime}\in P$.
Thus $b^{\prime}xb^{\prime}-b^{\prime}=p^{\prime}$ for some $p^{\prime}\in P$, so
$b^{\prime}=-p^{\prime}+b^{\prime}xb^{\prime}.$
Hence
\begin{center}
$b=p+b^{\prime}=p-p^{\prime}+b^{\prime}xb^{\prime}=p^{\prime\prime}+b^{\prime}xb^{\prime}\in P+(B_{1}NB_{2}\cap B_{2}NB_{1})$
\end{center}
where $p^{\prime\prime}=p-p^{\prime}$.
Therefore
\begin{equation}\label{Eq13}
P+(B_{1}\cap B_{2})\subseteq P+(B_{1}NB_{2}\cap B_{2}NB_{1}).
\end{equation}
\end{proof}


\begin{theorem}\label{Th2.8}
Let $N$ be a $P$-regular nearring, and $\{B_{i}\mid i\in \mathds{Z}$ and $1\leq i\leq n\}$ a nonempty family of bi-ideals of $N$. Then
\begin{center}
$P+(\displaystyle \bigcap_{i=1}^{n}B_{i})\subseteq P+(B_{1}NB_{n}\cap B_{2}NB_{n}\cap\ldots\cap B_{n-1}NB_{n}\cap B_{n}NB_{1}\cap B_{n}NB_{2}\cap\ldots\cap B_{n}NB_{n-1})$.
\end{center}
\end{theorem}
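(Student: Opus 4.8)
The plan is to mimic the proof of \thmx~\ref{Th2.7} verbatim, generalizing from two bi-ideals to $n$. The essential mechanism is the same: a single $P$-regular ``sandwich'' $b'xb'$ can be made to land simultaneously in every component of the large intersection, because the chosen element $b'$ belongs to all of the $B_i$ at once. So no induction is needed here (in contrast with \thmx~\ref{Th2.5_1}, where a genuine induction built up a long product $b_1x_1b_2\cdots b_n$); a length-one product suffices because the target only involves pairwise products $B_iNB_n$ and $B_nNB_i$.

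First I would take an arbitrary $b\in P+\bigl(\bigcap_{i=1}^{n}B_{i}\bigr)$ and write $b=p+b'$ for some $p\in P$ and $b'\in\bigcap_{i=1}^{n}B_{i}$, so that $b'\in B_{i}$ for every $i$ with $1\leq i\leq n$. Next, invoking $P$-regularity of $N$, I would pick $x\in N$ with $b'xb'-b'\in P$; writing $b'xb'-b'=p'$ with $p'\in P$ gives $b'=-p'+b'xb'$. Substituting yields
\begin{center}
$b=p+b'=p-p'+b'xb'=p''+b'xb'$,
\end{center}
where $p''=p-p'\in P$ since $P$ is closed under addition and additive inverses.

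The key step is then to verify that the product $b'xb'$ belongs to the whole intersection on the right-hand side. For each $i$ with $1\leq i\leq n-1$ we have $b'\in B_{i}$ and $b'\in B_{n}$, so $b'xb'=b'\,x\,b'\in B_{i}NB_{n}$; symmetrically, from $b'\in B_{n}$ and $b'\in B_{i}$ we get $b'xb'\in B_{n}NB_{i}$. As this holds for every such $i$, the single element $b'xb'$ lies in
\begin{center}
$B_{1}NB_{n}\cap B_{2}NB_{n}\cap\ldots\cap B_{n-1}NB_{n}\cap B_{n}NB_{1}\cap B_{n}NB_{2}\cap\ldots\cap B_{n}NB_{n-1}$,
\end{center}
whence $b=p''+b'xb'$ lies in $P$ plus this intersection, giving the desired containment.

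I do not expect a genuine obstacle. The only point requiring care is the bookkeeping of the $2(n-1)$ intersection components, together with the observation that one and the same element $b'xb'$ simultaneously witnesses membership in all of them; this is exactly what makes $b'$ lying in every $B_{i}$ the crucial hypothesis. Everything else is the routine use of \thmx~\ref{Th2.3} (or rather the $P$-regular decomposition used in its proof) and the additive-subgroup closure of $P$.
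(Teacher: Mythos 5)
Your proof is correct, but it takes a genuinely different route from the paper's. The paper proves this statement by citing \thmx~\ref{Th2.7} as a black box: it applies that theorem to the two bi-ideals $\bigcap_{i=1}^{n-1}B_{i}$ (a bi-ideal by \lemx~\ref{K13} and \corx~\ref{K13_1}) and $B_{n}$, obtaining $P+(\bigcap_{i=1}^{n}B_{i})\subseteq P+\bigl((\bigcap_{i=1}^{n-1}B_{i})NB_{n}\cap B_{n}N(\bigcap_{i=1}^{n-1}B_{i})\bigr)$, and then expands via the monotonicity $(\bigcap_{i=1}^{n-1}B_{i})NB_{n}\subseteq B_{i}NB_{n}$ for each $i$ (and symmetrically) to reach the stated right-hand side; the paper dresses this up as an induction on $n$, but the inductive hypothesis is never actually invoked, so your observation that no induction is needed is consistent with what the paper's argument really does. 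You instead re-run the sandwich argument from scratch: decompose $b=p+b'$, choose a single $x$ with $b'xb'-b'\in P$, and note that the one element $b'xb'$ simultaneously witnesses membership in all $2(n-1)$ components because $b'$ lies in every $B_{i}$. Both arguments are sound and rest on the same underlying mechanism (one $P$-regular sandwich, exactly as in \thmx~\ref{Th2.7}); yours is self-contained and makes the logical structure transparent, while the paper's is shorter on the page because it reuses \thmx~\ref{Th2.7}, at the cost of invoking the intersection-of-bi-ideals lemma and an extra monotonicity step.
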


\begin{proof}
By \thmx~\ref{Th2.6}, we have $P+B_{1}=P+B_{1}NB_{1}$. That is $P+B_{1}\subseteq P+B_{1}NB_{1}$.
Assume that the theorem is true for integer $n-1$.
By \thmx~\ref{Th2.7}, we have
\begin{eqnarray*}
P+(\displaystyle \bigcap_{i=1}^{n}B_{i})&=&P+(\displaystyle \bigcap_{i=1}^{n-1}B_{i}\cap B_{n})\\
&\subseteq&P+(\displaystyle (\bigcap_{i=1}^{n-1}B_{i})NB_{n}\cap B_{n}N\displaystyle (\bigcap_{i=1}^{n-1}B_{i}))\\
&\subseteq&P+((B_{1}\cap B_{2}\cap\ldots\cap B_{n-1})NB_{n}\cap B_{n}N(B_{1}\cap B_{2}\cap\ldots\cap B_{n-1}))\\
&\subseteq&P+(B_{1}NB_{n}\cap B_{2}NB_{n}\cap\ldots\cap B_{n-1}NB_{n}\cap\\
&&B_{n}NB_{1}\cap B_{n}NB_{2}\cap\ldots\cap B_{n}NB_{n-1}).
\end{eqnarray*}
\end{proof}


\section*{Acknowledgment}
The authors wish to express their sincere thanks to the referees for the valuable suggestions which lead to an improvement of this paper.



{\bf Received: \today}

\end{document}